\def\lc{\left\lceil}   
\def\rc{\right\rceil}
\def\lf{\left\lfloor}   
\def\rf{\right\rfloor}
\DeclareMathOperator{\prob}{prob}
\newtheorem{theorem}{\noindent Theorem}
\newtheorem{lemma}{\noindent Lemma}
\newtheorem{proposition}{\noindent Proposition}
\newtheorem{corollary}{\noindent Corollary}
\title{Regular behaviour of the maximal 
hypergraph chromatic number}
\author{Danila Cherkashin\footnote{National Research University Higher School of Economics, Soyuza Pechatnikov str., 16, St. Petersburg, Russian Federation}, 
Fedor Petrov\footnote{Saint Petersburg State University, Faculty of Mathematics and Mechanics; St.~Petersburg Department of V.~A.~Steklov Institute of Mathematics of the Russian Academy of Sciences.}}
\begin{document}

\maketitle

\begin{abstract}
Let $m(n,r)$ denote the minimal number of edges in an $n$-uniform hypergraph which is not $r$-colorable. It is known that for a fixed $n$ one has
\[
c_n r^n < m(n,r) < C_n r^n. 
\]
We prove that for any fixed $n$ the sequence $a_r := m(n,r)/r^n$ has a limit, which was conjectured by Alon.
We also prove the list colorings analogue of this 
statement.
\end{abstract}

\section{Introduction}

A hypergraph $H=(V,E)$ consists of a finite set of \textit{vertices}
$V$ and a family $E$ of the subsets of $V$,
which are called \textit{edges}. A hypergraph is called
\textit{$n$-uniform} if every edge has size $n$.
A \textit{vertex $r$-coloring} of a hypergraph $H = (V,E)$ is a map from $V$ to $\{1,\ldots,r\}$.
A coloring is \textit{proper} if there is no monochromatic edges, i.e., any
edge $e\in E$ contains two vertices
of different color.
The \textit{chromatic number} of a hypergraph $H$ is the smallest number $\chi(H)$ such that there exists a proper $\chi(H)$-coloring of $H$.   
Let $m(n,r)$ be the minimal number of edges in an $n$-uniform hypergraph with chromatic number more than $r$.

We are interested in the case when $n$ is much smaller than $r$ (see~\cite{RaiSh} for the general case and related problems).

\subsection{Upper bounds}

For $n=2$ (i.e. for graphs) the problem of finding $m(n,r)$ is trivial.
Indeed, $m(2,r) \geq \binom{r+1}{2}$ since any coloring of a given $G$ in $\chi(G)$ colors 
should contain an edge between every pair of colors, otherwise one can join them. From the other hand, the complete graph on $r+1$ vertices gives an example.

Erd{\H o}s conjectured~\cite{erdos1979some} that 
\[
m(n,r) = \binom{(n-1)r+1}{n},
\]
for $r > r_0(n)$, that is achieved by the complete hypergraph on $(n-1)r+1$ vertices.

However Alon~\cite{alon1985hypergraphs} disproved the conjecture for $n$ large enough by using the
estimate
\[
m(n,r) \leqslant \min_{a \geq 0} T(r(n + a - 1) + 1, n + a, n),
\]
where the Tur{\'a}n number $T(v,k,n)$ is the smallest number of edges in an $n$-uniform hypergraph on $v$ vertices
such that every induced subgraph on $k$ vertices contains an edge. Different bounds on Tur{\'a}n numbers beat the complete hypergraph construction when $n > 3$ (see~\cite{sidor} for a survey).
So the case $n = 3$ is in some sense the most 
interesting.

Using the same inequality with better bounds on Tur{\'a}n numbers, Akolzin and Shabanov~\cite{akolzin2016colorings} showed that
\[
m(n,r) < Cn^3\ln n \cdot r^n.
\]

Alon~\cite{alon1985hypergraphs} conjectured that for a fixed $n$ the quantity $m(n,r)$ has regular behavior, i.e. the sequence $m(n,r)/r^n$ has a limit.


\subsection{Lower bounds}

There are several ways to show an inequality of type $m(n,r) > c(n) r^n$.
Alon~\cite{alon1985hypergraphs} uses an alteration-type trick to get the first bound of such type
\[
m(n,r) \geq (n-1) \lc \frac{r}{n} \rc \lf \frac{n-1}{n}r \rf ^{n-1}.
\]
Pluh{\'a}r's random greedy approach~\cite{Pl} gives the bound
\[
m(n,r) > c\sqrt{n}r^n
\]
as noted in~\cite{RaiSh}.
Finally, combining two previous arguments Akolzin and Shabanov~\cite{akolzin2016colorings} proved that
\[
m(n,r) > c\frac{n}{\ln n}r^n.
\]

\subsection{List colorings}

Let $H = (V,E)$ be a hypergraph and let $\{L(v)\}$, $v \in V(H)$ be sets; we refer to these sets as \textit{lists}. A \textit{list coloring} of $H$ is an assignment of a color from $L(v)$ to each 
$v \in V (H)$.
The \textit{list chromatic number} of a hypergraph $H$ 
Define the quantity $m_c(n,r)$ as the minimal number of edges 
of an $n$-uniform hypergraph with list
chromatic number greater than $r$.

By definition, $m_c(n,r) \leq m(n,r)$, and this is the only known upper bound on $m_c(n,r)$ (also, it is not known whether $m_c(n,r) = m(n,r)$ for all $n$, $r$).

It was recently proved by B.~Sudakov (unpublished) that there is $c > 0$ such that 
\[
m_c(n,r) \geq cr^n
\]
for all $n$, $r > r_0(n)$.

\paragraph{Structure of the paper.} In Section 2 contains the proof of Alon conjecture that the sequence $a_r := m(n,r)/r^n$ has a limit. Section 3 proofs the same result for $m_c(n,r)$.
The final section consists of open questions.

\section{Colorings}

Fix $n>1$ and denote by
$f(N)$ the maximal possible chromatic
number of an $n$-uniform
hypergraph with $N$ edges,
in particular
$f(0)=1$.
The function $f:
\mathbb{Z}_{\geqslant 0}\rightarrow \mathbb{R}_{\geqslant 1}$ non-strictly increases and 
satisfies
$$m(n,r)=\min\{N:f(N)>r\}.$$
Therefore $m(n,r)\sim C r^n$ if and only if
$f(N)\sim (N/C)^{1/n}$. 

Here is the crucial

\begin{lemma}\label{cru}
For any $N > 0$ and any positive integer $p$ we have 
\begin{equation}\label{recursive}
  f(N)\leqslant \max_{a_1+a_2+\dots +a_p\leqslant N/p^{n-1}} f(a_1)+f(a_2)+\dots+f(a_p).
\end{equation}

\end{lemma}

\begin{proof}
Let $H=(V,E)$ be an $n$-uniform hypergraph with $|E|=N$. 

Choose the auxiliary colors $\eta(v)\in \{1,2,\dots,p\}$
at random uniformly and independently and denote $V_i=\eta^{-1}(\{i\})$.
Let $H_i = (V_i,E_i)$ be the hypergraph induced by $H$ on $V_i$.
The expectation of $\sum_i |E_i|$ equals $|E|/p^{n-1}$ because each edge of $H$ belongs to some $H_i$ with the same probability $1/p^{n-1}$.
Therefore there exists a certain auxiliary coloring $\eta$ such that 
\[
\sum |E_i| \leqslant N/p^{n-1}.
\]
Fix such a coloring $\eta$ and properly color each $H_i$ using $f(|E_i|)$ colors,
using disjoint sets of colors for different $i$.
Totally we use $\sum f(|E_i|)$ colors
and $H$ is colored properly.

Since $H$ was an arbitrary $n$-uniform hypergraph with $N$ the proof is completed.
\end{proof}

Further part of the proof is
completely analytical, all combinatorics was
in Lemma \ref{cru}.

Namely, the following general statement holds:

\begin{theorem}\label{gt}
Assume that $n>1$ is a fixed 
integer, $N_0>0$ is a constant,
$f:\mathbb{Z}_{\geqslant 0}\to \mathbb{R}_{>0}$
is a function satisfying \eqref{recursive} for
all $N\geqslant N_0$ and $p\in \{2,3\}$. 
Then
\[
\lim_{x\to \infty} \frac{f(x)}{x^{1/n}}
\]
exists and is finite.
\end{theorem}

For proving Theorem~\ref{gt} we use
the followings Lemmata.

\begin{lemma}\label{segment}
Denote $c_n=\lceil
(1-2^{1/n-1})^{-n}\rceil$.
For any
$M\geqslant N_0$ the inequality
\[
f(N)\leqslant 
N^{1/n}\cdot \max_{M\leqslant a< c_n M} f(a)\cdot a^{-1/n}
\]
holds for all $N\geqslant M$.
\end{lemma}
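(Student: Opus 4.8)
The plan is to prove the inequality by induction on $N$, using Lemma~\ref{cru} to pass from $N$ to strictly smaller arguments and the concavity of $t\mapsto t^{1/n}$ to transport the bound without loss. Write $K=\max_{M\le a<c_nM}f(a)\,a^{-1/n}$, so that the claim is exactly $f(N)\le KN^{1/n}$ for every $N\ge M$. For $N$ in the base window $M\le N<c_nM$ this is immediate from the definition of $K$. For larger $N$ I would apply Lemma~\ref{cru} with $p=2$ (matching the factor $2^{1-n}$ appearing in $c_n$), obtaining numbers $a_1,a_2$ with $a_1+a_2\le N/2^{n-1}$ and $f(N)\le f(a_1)+f(a_2)$; since each $a_i\le N/2^{n-1}<N$, both parts are strictly smaller instances to which the inductive hypothesis may be applied.

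The engine of the argument is a scale-invariance that leaves no slack. If both parts satisfy $a_i\ge M$, the inductive hypothesis gives $f(a_i)\le Ka_i^{1/n}$, and concavity of $t\mapsto t^{1/n}$ yields, for any number of parts $p$,
\[
\sum_{i=1}^{p} f(a_i)\le K\sum_{i=1}^{p} a_i^{1/n}\le Kp^{1-1/n}\Big(\sum_i a_i\Big)^{1/n}\le Kp^{1-1/n}\big(N/p^{n-1}\big)^{1/n}=KN^{1/n},
\]
the powers of $p$ cancelling exactly. Thus, as long as every part stays at or above $M$, the induction closes with the sharp constant $1$; the role of the lemma is entirely absorbed into this one computation, as the authors indicate.

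The main obstacle is precisely the splits in which a part drops below $M$, where neither the inductive bound nor the definition of $K$ applies and the ratio $f(a)\,a^{-1/n}$ can genuinely exceed $K$; since the maximum in Lemma~\ref{cru} ranges over all admissible splits, such parts cannot simply be avoided and their contribution must be bounded. This is exactly where the value of $c_n$ is forced. Writing $c_n^{1/n}=(1-2^{1-n})^{-1}=\tfrac{2^{n-1}}{2^{n-1}-1}$, so that $c_n=\big(\tfrac{q+1}{q}\big)^{n}$ with $q=2^{n-1}-1$, one recognises $c_n$ as the multiplicative spacing between consecutive reductions available from the lemma; it is the smallest window width for which, for every $N\ge c_nM$, an admissible reduction (possibly after composing a few applications, or choosing $p$ in place of $2$) lands in $[M,c_nM)$ and the undersized parts can be absorbed without exceeding $KN^{1/n}$. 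Settling this borderline regime—verifying that a feasible reduction always exists and bounding the small parts (for which one also has the global estimate $f(a)\le C_n a^{1/n}$ coming from the lower bounds of the introduction)—is the delicate point; once it is in place, the scale-invariant identity above propagates the bound from the base window to all $N\ge M$.
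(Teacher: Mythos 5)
Up to the point where you stop, your argument is exactly the paper's: induction on $N$, base window $[M,c_nM)$, Lemma~\ref{cru} with $p=2$, and concavity of $t\mapsto t^{1/n}$ when both parts are at least $M$, with the powers of $p$ cancelling. But the case you leave open --- one part falling below $M$ --- is a genuine gap, not a deferrable technicality: it is the only step remaining, and the idea that closes it never appears in your proposal. That idea is simply the monotonicity of $f$ (noted in the paper right after $f$ is defined): if $a<M$, then $f(a)\leqslant f(M)\leqslant KM^{1/n}$, because $M$ itself belongs to the window over which $K$ is the maximum. Combined with $f(b)\leqslant K\max\bigl(M,\,N/2^{n-1}\bigr)^{1/n}$ (induction hypothesis if $b\geqslant M$, monotonicity again if not), this yields
\[
f(N)\leqslant K\Bigl(M^{1/n}+\max\bigl(M,\,N/2^{n-1}\bigr)^{1/n}\Bigr),
\]
which is at most $KN^{1/n}$ as soon as $N/M$ exceeds an explicit constant depending on $n$ only; that elementary numerical requirement, and nothing else, is what dictates the threshold $c_n$. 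Your ``multiplicative spacing between consecutive reductions'' reading of $c_n$ plays no role in any proof.

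Neither of your two suggested repairs can substitute for this. You cannot ``compose a few applications'' of Lemma~\ref{cru} so that the reduction ``lands in $[M,c_nM)$'': the lemma controls only the \emph{sum} of the parts, never where any individual part falls, so adversarial splits with undersized parts cannot be steered away; they must be bounded, and $f(a)\leqslant f(M)$ is what bounds them. The global estimate $f(a)\leqslant C_na^{1/n}$ from the introduction also does not suffice as stated: $C_n$ bears no relation to $K=\max_{M\leqslant a<c_nM}f(a)a^{-1/n}$, so that route gives $f(N)\leqslant KN^{1/n}$ only for $N/M$ beyond a threshold involving $(C_n/K)^n$; making that a constant depending on $n$ alone would further require a uniform positive lower bound on $K$ (which needs the upper bound $m(n,r)\leqslant Cr^n$), and even then one gets the lemma only with a different, non-explicit constant. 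You carry out none of this, so the borderline regime remains unproven. A final remark: if you do write out the numerical inequality above, you will find it requires $N/M\geqslant(1-2^{-(n-1)/n})^{-n}$, which is \emph{larger} than $(1-2^{1-n})^{-n}$; so the stated value of $c_n$ appears to be a slip in the paper itself (harmless for the limit theorem, where any fixed constant works, but it certainly cannot be ``recognised'' as correct the way you attempt).
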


\begin{proof}
Induct on $N\in \{M,M+1,\ldots\}$. The base 
$N<c_n M$ is clear. 

The induction step
from  $M,M+1,\dots,N-1$ to  
$N$ assuming $N\geqslant c_n M$. 

Denote
\[
\lambda=\max_{M\leqslant a< c_n M} f(a)\cdot a^{-1/n}.
\]
By \eqref{recursive} with $p=2$
we have $f(N)\leqslant f(a)+f(b)$
for certain 
non-negative 
integers $a,b$ such that $a+b\leqslant N/2^{n-1}$. If $\min(a,b)\geqslant M$, then
by induction proposition we get
\[
f(a)+f(b)\leqslant \lambda (a^{1/n}+b^{1/n})\leqslant 2\lambda \left (\frac{a+b}2 \right )^{1/n}\leqslant \lambda N^{1/n},
\]
as desired. If, for example, $a<M$, 
we get
\[
f(a)+f(b)\leqslant f(M)+f(b) \leqslant \lambda \left (M^{1/n}+\left (\frac{N}{2^{n-1}} \right)^{1/n} \right)
\leqslant \lambda N^{1/n}
\]
provided that  $N\geqslant c_n M$.
\end{proof}

Lemma~\ref{segment} in particular implies that the maxima
$M(k)$
of the function $g(x) := f(x)x^{-1/n}$ 
over the segments $[c_n^k,c_n^{k+1}]$ eventually
(for $k\geqslant k_0$)
do not increase.
Let $\alpha_0$ denote the 
limit of $M(k)$, it is also the upper
limit of the function $g$.

Fix $p$ in Lemma~\ref{cru}.

Further we need the following standard technical
\begin{proposition}\label{Jensen_acc}
For any $\theta>1$ there exists
$\delta>0$ such that for
all non-negative real numbers $x_1,\ldots,x_p$
with the arithmetic mean $x_0=(x_1+\ldots+x_p)/p$ the 
inequality
\[
\sum_{i=1}^p x_i^{1/n}\geqslant (p-\delta)\cdot x_0^{1/n}
\]
yields $x_i\in [x_0/\theta,x_0\cdot \theta]$. 
\end{proposition}

\begin{proof}
The case $x_0=0$ is clear. If $x_0>0$,
denote $y_i=x_i/x_0$, then $\sum y_i=p$
and $\sum y_i^{1/n}\geqslant p-\delta$.
Let $\ell(x)=1+(x-1)/n$ be a tangent
line to the graph of the function $x^{1/n}$ at point
$(1,1)$. We have $\sum \ell(y_i)=p$. By concavity we have $y^{1/n}\leqslant \ell(y)$
with equality only at $y=1$, and for
given $\theta>1$ there exists $\delta>0$
such that $\ell(y)-y^{1/n}>\delta$ for
$y\notin [1/\theta,\theta]$. Therefore
$$
\delta\geqslant p-\sum_{i=1}^p y_i^{1/n}=\sum_{i=1}^p \left(\ell(y_i)-y_i^{1/p}\right),
$$
all summands $\ell(y_i)-y_i^{1/p}$ belong
to $[0,\delta]$ and therefore $y_i\in [1/\theta,\theta]$
and $x_i\in [x_0/\theta,x_0\theta]$.
\end{proof}

We proceed with the proof of Theorem~\ref{gt}.

Let $N$ be a large integer with $g(N)=\alpha_0+o(1)$.
In other words, $N$ grows to infinity along
such a subsequence that $g(N)$ converges to $\alpha_0$. 
Find for this $N$ the numbers
$a_1,\dots,a_p$ as in Lemma~\ref{cru}.
Note that for any
$\varepsilon>0$ there exists $C>0$ such that 
$f(a)\leqslant (\alpha_0+\varepsilon)a^{1/n}+C$ for all integers $a\geqslant 0$. It follows that $f(a)\leqslant \alpha_0 a^{1/n}+o(N^{1/n})$ uniformly for all $a\leqslant N$. 
Therefore
$$
\alpha_0\cdot p\cdot \left(\frac{a_1+\ldots+a_p}p\right)^{1/n}
\leqslant
\alpha_0N^{1/n}=f(N)+o(N^{1/n})\leqslant 
\alpha_0\sum_{i=1}^p a_i^{1/n}+o(N^{1/n}).
$$
So all inequalities here are
equalities with accuracy $o(N^{1/n})$.
In particular $\sum a_i=N/p^{n-1}+o(N)$ and 
all $a_i$ are asymptotically equal to $N/p^n+o(N)$ by Proposition 
\ref{Jensen_acc}. Also $f(a_i)=\alpha_0 N^{1/n}/p+o(N^{1/n})$ for all $i=1,\ldots,p$.
Equivalently, $g(a_i)=\alpha_0+o(1)$ for all 
$i=1,\ldots,p$.

Consider the numbers of the form
$2^{nx} 3^{ny}$ with non-negative
integer $x,y$, call them \textit{appropriate}
numbers.

So we proved that for large $N$ with 
$g(N)=\alpha_0+o(1)$
there exists
$\tilde{N}=N/p^{n}+o(N)$ with 
$g(\tilde{N})=\alpha_0+o(1)$. 
Consecutively using this for $p\in \{2,3\}$ we conclude that whenever $g(N)=\alpha_0+o(1)$ and $R$ is appropriate, then there exists $a=N/R+o(N)$ such that
$g(a)=\alpha_0+o(1)$.

The ratio of two consecutive appropriate numbers tends to 1 by the basic
Dirichlet--Kronecker
Diophantine approximation lemma.
Fix $\rho>1$ and choose appropriate
numbers 
$r_1<r_2<\dots<r_m$ so that $r_{i+1}/r_i<\rho$,
but $r_1<c_n^S$, $r_m>c_n^{S+10}$ for certain positive integer $S$.

So we may find numbers
$N_i=N/r_i+o(N)$ such that $g(N_i)=\alpha_0+o(1)$ 
for all $i=1,2,\dots,m$.

For large $k$ choose $N\in [c_n^k,c_n^{k+1}]$ 
with maximal possible value
$g(N)$; we have $g(N)=\alpha_0+o(1)$.
For any integer number $x$ in the
segment 
$[c_n^{k-S-2},c_n^{k-S-1}]$
choose minimal $i$ such that
$x>N_i$. Then $x\leqslant N_i\cdot \rho$ and
\[
f(x)\geqslant f(N_i)=(\alpha_0+o(1)) N_i^{1/p} \geqslant (\alpha_0+o(1)) (x/\rho)^{1/n}.
\]
Therefore 
\[
\liminf f(x)x^{-1/n}\geqslant \alpha_0 \rho^{-1/n},
\]
and since $\rho>1$ was arbitrary, the lower
limit of the function $g(x)=f(x)x^{-1/n}$ equals to its upper limit
$\alpha_0$. This completes the proof of
Theorem \ref{gt}.

Theorem \ref{gt} and Lemma \ref{cru} immediately yield

\begin{theorem}\label{mnr}
For fixed $n$, the sequence $m(n,r)/r^n$ has a limit.
\end{theorem}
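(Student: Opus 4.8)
The plan is to pass from $m(n,r)$ to the function $g(N)=f(N)N^{-1/n}$. By the equivalence recorded in the text, $m(n,r)/r^n$ converges if and only if $g(N)$ has a limit as $N\to\infty$, and in that case $\lim_r m(n,r)/r^n=(\lim_N g(N))^{-n}$. Lemma~\ref{segment} already supplies half of this: it shows that the maxima $M(k)$ of $g$ over the geometric windows $[c_n^k,c_n^{k+1}]$ form a non-increasing sequence, so $\limsup_N g(N)$ exists; call it $\alpha_0$. The whole task therefore reduces to the lower bound $\liminf_N g(N)\ge\alpha_0$.

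The engine is a \emph{downward propagation} of near-optimal scales, extracted from Lemma~\ref{cru}. Fixing $p$ and writing $f(a_i)=g(a_i)a_i^{1/n}$, the concavity of $x\mapsto x^{1/n}$ gives
\[
\sum_i f(a_i)\le p\cdot\bigl(\max_i g(a_i)\bigr)\Bigl(\tfrac1p\sum_i a_i\Bigr)^{1/n}\le \bigl(\max_i g(a_i)\bigr)N^{1/n},
\]
using $\sum a_i\le N/p^{n-1}$, whence $g(N)\le\max_i g(a_i)$. The crucial point is that this Jensen estimate is \emph{rigid}: if $g(N)$ is within $\varepsilon$ of $\alpha_0$ while every $g(a_i)\le\alpha_0+o(1)$ (automatic once $N$, hence each $a_i$, is large), then the inequality is nearly an equality, which forces the parts to be balanced, $a_i=N/p^n+o(N)$, and forces $g(a_i)=\alpha_0-o(1)$. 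So one near-optimal scale $N$ yields a near-optimal scale $N/p^n$, and iterating with $p=2$ and $p=3$ yields near-optimal scales $N\cdot 2^{-nx}3^{-ny}$ for any bounded $x,y\ge 0$.

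To reach \emph{every} large scale I exploit the multiplicative density of the \emph{appropriate} numbers $2^{nx}3^{ny}$: since $\log 2/\log 3$ is irrational, Dirichlet--Kronecker approximation makes the ratio of consecutive appropriate numbers tend to $1$. Given $\rho>1$, I choose appropriate $r_1<\dots<r_m$ with $r_{i+1}/r_i<\rho$ covering a full window $[c_n^S,c_n^{S+10}]$, and from a near-optimal $N\in[c_n^k,c_n^{k+1}]$ (which exists for arbitrarily large $k$ because $\limsup g=\alpha_0$) the propagation step produces points $N_i=N/r_i+o(N)$ with $g(N_i)$ close to $\alpha_0$. These $N_i$ are $\rho$-dense in a window around $c_n^{k-S}$, so for every integer $x$ there one may choose the minimal $i$ with $x>N_i$; then $x\le N_{i-1}<\rho N_i$, and monotonicity of $f$ gives $f(x)\ge f(N_i)\ge(\alpha_0-o(1))N_i^{1/n}\ge(\alpha_0-o(1))(x/\rho)^{1/n}$. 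Hence $\liminf_N g(N)\ge\alpha_0\rho^{-1/n}$, and since $\rho>1$ is arbitrary, $\liminf g\ge\alpha_0=\limsup g$, so $g$ converges and the theorem follows.

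The main obstacle is the quantitative side of the rigidity argument together with the accounting of accumulated error. One must turn ``$g(N)$ close to $\alpha_0$'' into an explicit bound on how close each $g(a_i)$ is to $\alpha_0$, and track how this closeness degrades through each propagation step. What saves the argument is that the exponents $x,y$ needed to realize each $r_i\le c_n^{S+10}$ are bounded \emph{independently} of $\rho$ and $k$, since the window $[c_n^S,c_n^{S+10}]$ has fixed multiplicative width; thus only a fixed finite number of splitting steps is ever applied, and the $o(N)$ error terms do not blow up as $\rho\to 1$.
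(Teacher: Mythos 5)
Your proposal reproduces the paper's own proof essentially step for step: the reduction to $g(N)=f(N)N^{-1/n}$, the non-increasing window maxima from Lemma~2 giving $\alpha_0=\limsup g$, the rigidity of the Jensen estimate in Lemma~1 to propagate near-optimal scales down by factors $p^n$ (iterated with $p=2,3$), the Dirichlet--Kronecker density of the numbers $2^{nx}3^{ny}$, and the fixed-$\rho$ chain yielding $\liminf g\geq\alpha_0\rho^{-1/n}$ --- indeed your write-up even silently corrects the paper's typos ($x^{-1/3}$, $N_i^{1/p}$, $E_i$). The one inaccurate remark is your closing claim that the exponents $x,y$ are bounded independently of $\rho$: in fact $S$, and hence those exponents, must grow as $\rho\to 1$, since the region where consecutive appropriate numbers have ratio below $\rho$ begins further out; this is harmless, though, because $\rho$ (hence $S$ and the number of splitting steps) is fixed before taking $N\to\infty$, and $\rho\to 1$ is taken only after the liminf, exactly as in the paper.
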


\section{List colorings}

Here we prove the choice version of Theorem \ref{mnr}.

\begin{theorem}\label{choicetheorem} For fixed integer $n>1$
the sequence $m_c(n,r)/r^n$ has a finite positive limit.
\end{theorem}

Denote by $f_c(N)$ the maximal possible
list chromatic number of an $n$-uniform 
hypergraph with $N$ edges. Since the list
chromatic number is always not less than
the chromatic number, we get 
\begin{equation}\label{snizu}
f_c(N)\geqslant \delta N^{1/n}
\end{equation}
for certain $\delta>0$ depending only on $n$.
Theorem \ref{choicetheorem} is equivalent to
the existence of a finite limit of
$f_c(N)/N^{1/n}$.

We use the following Chernoff
type concentration inequality
for the sum of independent 
$\{0,1\}$-valued random variables.

\begin{proposition}\label{LLN}
If $n$ is a positive
integer and $\xi_1,\ldots,\xi_n$ are 
independent random variables taking
values in $\{0,1\}$, $A$ is
the expectation of $S:=\sum_{i=1}^n \xi_i$,
$T\in [0,A]$, then 
$$
\prob \{S\leqslant A-T\}\leqslant e^{-\frac{T^2}{2A}}.
$$
\end{proposition}

See the proof, for example, in
\cite{michaelmitzenmacher2005}, Theorem 4.5.

We need the following technical statements.

\begin{lemma}\label{tl1}
Assume that $n>1$ is a fixed 
integer, $f:\mathbb{Z}_{\geqslant 0}\to \mathbb{R}_{>0}$
is a function satisfying 
\begin{equation}\label{t1}
    f(x)\leqslant \max_{a+b\leqslant x/2^{n-1}}f(a)+f(b)+M(f(a)^{\alpha}+f(b)^{\alpha}),\quad \forall x\geqslant x_0,
\end{equation}
for certain constants $x_0>0$, $\alpha\in (0,1)$,
$M>0$. Then $f(x)=O(x^{1/n})$ for large $x$.
\end{lemma}

\begin{proof}
We recursively define the increasing sequence $h_0\leqslant h_1\leqslant \ldots$
of positive numbers such that 
\begin{equation}\label{t2}
    f(x)\leqslant h_k\cdot x^{1/n}\quad \textrm{for}\quad
    1\leqslant x\leqslant x_0\cdot 2^{(n-1)k}.
\end{equation}
Choose $h_0$ large enough (so that $h_0>1$, 
\eqref{t2} for $k=0$
is satisfied and also something else, to be specified later,
holds). Assume that $k\geqslant 1$ and \eqref{t2}
holds for $0,1,\ldots,k-1$. Choose $x\in (x_0\cdot 2^{(n-1)(k-1)},x_0\cdot 2^{(n-1)k}]$.  
This $x$ satisfies \eqref{t1}.
Fix corresponding $a,b$ and consider two
cases: either $\min(a,b)=0$ or both $a,b$ are positive.

In the first case we get 
\begin{equation}\label{t3}
  f(x)\leqslant h_{k-1}(2^{1-n} x)^{1/n}+M h_{k-1}^{\alpha}(2^{1-n} x)^{\alpha/n})+f(0)+M(f(0))^\alpha.
\end{equation}
If $h_{k-1}$ is large enough, the right hand side
does not exceed $h_{k-1} x^{1/n}$. This may
be guaranteed by choosing large enough $h_0$. 

In the second case both $a$ and $b$ satisfy the induction
hyphotesis and we get

\begin{equation}\label{t4}
    f(x)\leqslant h_{k-1}(a^{1/n}+b^{1/n})+
    2M h_{k-1}^{\alpha}(2^{1-n}x)^{\alpha/n}\leqslant
    h_{k-1}x^{1/n}+2M h_{k-1}^{\alpha}(2^{1-n}x)^{\alpha/n}.
\end{equation}
The right hand side of \eqref{t4} does not exceed
$$h_{k-1}x^{1/n}\left(1+2M x^{(\alpha-1)/n})\right).
$$
Since $x\geqslant x_0\cdot 2^{(n-1)(k-1)}$,
it allows to choose
$$
h_k=h_{k-1}\left(1+2 M x_0^{(\alpha-1)/n}\cdot 2^{(\alpha-1)(k-1)(n-1)/n}
\right)
$$
and \eqref{t2} for $k$ holds.
The sequence $h_k$ obviously increases and
the sequence $h_k/h_{k-1}-1$ decays exponentially.
Thus the infinite product of $h_k/h_{k-1}$
converges, i.e., $h_k$ is bounded. Lemma is proved.
\end{proof}

\begin{lemma}\label{tl2}
Assume that $n>1$ is a fixed integer, $\alpha\in (0,1)$,
$M>0$ and $\delta>0$ are fixed constants. Then there
exist constants $C>0$ and $x_0>0$ such that
for $p=2$ and $p=3$ we have
\begin{equation}\label{t5}
   \delta\left(x^{1/n}-\sum_{i=1}^p a_i^{1/n}\right)\geqslant
C\left(x^{\alpha/n}-\sum_{i=1}^p a_i^{\alpha/n}\right)+
Mx^{\alpha/n},
\end{equation}
for every $x\geqslant x_0$ and $a_i\geqslant 0$ such that
\[
\sum_{i=1}^p a_i\leqslant p^{1-n}\cdot x. 
\]
\end{lemma}

\begin{proof}
The left hand side of \eqref{t5} is always non-negative
by Jensen inequality for the concave function $t^{1/n}$.
Note that if $a_i=p^{-n}x$ for all $i=1,\ldots,p$, 
then $x^{\alpha/n}-\sum_{i=1}^p a_i^{\alpha/n}=x^{\alpha/n}
(1-p^{1-\alpha})<0$. Fix $C$ such that
$C(2^{1-\alpha}-1)>M$. 

Then we may fix 
$\varepsilon>0$ such that whenever $|a_i/x-p^{-n}|<\varepsilon$
for all $i=1,\ldots,p$, the right hand side of
\eqref{t5} is non-positive and therefore
\eqref{t5} holds in this case. 

By Proposition \ref{Jensen_acc},
otherwise there exists $\varepsilon_1>0$
such that left hand side of \eqref{t5} is not less
than $\varepsilon_1 x^{1/n}$. It implies that \eqref{t5}
holds in this case for large enough $x$.
\end{proof}

\begin{corollary}\label{tl3}
Assume that $n>1$ is a fixed 
integer,
$f:\mathbb{Z}_{\geqslant 0}\to \mathbb{R}_{>0}$
is a function satisfying $f(x)\geqslant \delta x^{1/n}$
for all $x\geqslant 0$ and
\begin{equation}\label{t7}
    f(x)\leqslant \max_{a_1+\ldots+a_p\leqslant x/p^{n-1}}\sum f(a_i)+Mx^{\alpha/n},\quad \forall x\geqslant x_0
\end{equation}
for $p\in \{2,3\}$ and
certain constants $x_0>0$, $\alpha\in (0,1)$,
$M>0$. Then there exist $C>0$ and $x_1>0$ such that
the function $\tilde{f}(x):=f(x)+Cx^{\alpha/n}-\delta x^{1/n}$
satisfies 
\begin{equation}\label{t8}
    \tilde{f}(x)\leqslant \max_{a_1+\ldots+a_p\leqslant x/p^{n-1}}\sum \tilde{f}(a_i),\quad \forall x\geqslant x_1.
\end{equation}
\end{corollary}

Now we give a recursive estimate for the maximal
possible list chromatic number for an $n$-uniform
hypergraph
with prescribed number of edges.

\begin{lemma}\label{choice}
There exists a constant $M>0$ such that for $p\in \{2,3\}$
and all non-negative integer $N$
we have
$$
f_c(N)\leqslant \max_{a_1+\ldots+a_p\leqslant N/p^{n-1}}\sum_{i=1}^p f_c(a_i)+M(f_c(a_i))^{2/3}.
$$
\end{lemma}

\begin{proof}
Let $H=(V,E)$ be an $n$-uniform hypergraph with $|E|=N$. 
Assume that any vertex $v\in V$ edge has a list $L(v)$ 
consisting of 
$\sum_{i=1}^p f_c(a_i)+c_i$ admissible colors, where
\[
c_i :=\lfloor M(f_c(a_i))^{2/3}\rfloor.
\]
It suffices
to prove that $H$ has a proper list
colorings with these lists. 

As in the proof of Lemma \ref{cru},
we
partition $V$ onto
disjoints subsets $V_i$ 
so that the corresponding induced
subgraphs $H_i=(V_i,E_i)$ of $H$
satisfy $\sum |E_i|\leqslant N/p^{n-1}$.
Denote $a_i=|E_i|$.

For any color $\alpha$
choose $\xi(\alpha)\in \{1,\ldots,p\}$
independently at random with
probability of
$\{\xi(\alpha)=i\}$ proportional to
$f_c(a_i)+c_i$.
Call an edge $e\in E$ \textit{nice} if it either
contains the vertices from different $V_i$'s,
or $e\in E_i$ and 
$|L(v)\cap \xi^{-1}(i)|\geqslant f_c(a_i)$
for all $n$ vertices $v\in e$. 
Due to Proposition \ref{LLN} the probability that 
an edge $e\in E_i$
is not nice does not exceed
$$
n\exp\left(-\frac{c_i^2}{2(f_c(a_i)+c_i)}\right)
$$
(the multiple $n$ comes from the number of vertices
in $e$ and applying the union bound).

If we permanently denote $f_c(a)=x$ for 
non-negative integer $a$,
$y=\lfloor Mx^{2/3}\rfloor$,
then
using the lower bound \eqref{snizu} and assuming 
$M>100$
we conclude that 
$$
\frac{y^2}{2(y+x)}
\geqslant \frac{M^2x^{4/3}}
{10\max(x,Mx^{2/3})}=
\frac1{10}
\min\left(M^2x^{1/3},Mx^{2/3}\right)\geqslant \frac{Mx^{1/3}}{10}
\geqslant \frac{M\delta^{1/3}a^{1/(3n)}}{10},
$$
and 
$$
a\exp\left(-\frac{y^2}{2(x+y)}\right)<1/n
$$
for all $a=0,1,\ldots$ 
provided that the constant
$M$ is chosen
large enough.

Fix such
a value of $M$, then
\[
n\sum_{i=1}^p a_i\exp\left(-\frac{c_i^2}{2(f_c(a_i)+c_i)}\right) < 1
\]
and with positive probability all edges are
nice. This allows to properly color each $H_i$
using the colors only
from $\xi^{-1}(i)$ and get
a proper coloring of $H$.
\end{proof}

Now Lemma  \ref{tl1} and
Lemma \ref{choice} for $p=2$ yield
$f_c(x)=O(x^{1/n})$. Therefore $f_c$ satisfies the conditions
of Corollary \ref{tl3} for
$\alpha=2/3$ and certain $M>0$ (and $x_0=1$).
Corresponding function $\tilde{f_c}$ satisfies
the conditions of Theorem \ref{gt},
hence $f_c(x)/x^{1/n}$ has a finite limit
and Theorem \ref{choicetheorem} is proved.

\section{Further questions}

\begin{itemize}
    \item First, recall that the Erd{\H o}s conjecture is still open in the case $n = 3$.
The survey and the best current lower bound are given in~\cite{cherkashin2019Erdos}.

    \item A hypergraph is called \textit{simple} if every pair of edges shares at most 1 vertex.
    Let $s(n,r)$ be the minimal number of edges in a simple $n$-graph which has no proper $r$-coloring.
    It is known~\cite{kostochka2001chromatic} that for a fixed $n$ one has 
    \[
    cr^{2n-2}\ln r \leqslant s(n,r) \leqslant Cr^{2n-2}\ln r.
    \]
    Unfortunately, we cannot show regularity of $s(n,r)$.

    \item Also it is natural to ask if $m(n,r)$ is regular on the first variable, i.e.  
\[
\lim\limits_{n\to \infty} \frac{m(n+1,r)}{m(n,r)} = r?
\]
    
\end{itemize}

\paragraph{Acknowledgements.} The paper is supported by the Russian Scientific Foundation grant 17-71-20153.
We are grateful to Saint Petersburg State
University IMC team and Mikhail Antipov for checking the proof.
We are grateful to Alexander Sidorenko for pointing out our inattention, relating to the use of Tur{\'a}n numbers.

\bibliographystyle{plain}
\bibliography{main}

\end{document}